\documentclass[11pt]{article}

\usepackage{amsmath, amssymb}
\usepackage{tikz}

\usepackage{authblk}
\usepackage{geometry}
\usepackage{hyperref}
\usepackage{graphicx}
\usepackage{epstopdf}
\usepackage{mathrsfs}
\usepackage{amsfonts}
\usepackage{amscd,amsthm,bm,psfrag}
\usepackage[numbers,sort&compress]{natbib}

\makeatletter
\renewcommand{\@seccntformat}[1]{{\csname the#1\endcsname}{\normalsize.}\hspace{.5em}}

\makeatother

\numberwithin{equation}{section}

\def \[{\begin{equation*}}
\def \]{\end{equation*}}

\newtheorem{thm}{Theorem}[section]

\newtheorem{lem}[thm]{Lemma}

\newtheorem{prop}[thm]{Proposition}
\newtheorem{rem}[thm]{Remark}

\newtheorem{prob}[thm]{Problem}

\newtheorem*{thm*}{Theorem}
\newtheorem*{prop*}{Proposition}

\newcommand\cK{{\mathcal K}}

\newcommand\cN{{\mathcal N}}

\newcommand\ex{\ensuremath{\mathrm{ex}}}
\newcommand\tw{\ensuremath{\operatorname{tw}}}

\def\final{0}  % set this to 1 to get a comment-free version
\def\iflong{\iffalse}
\ifnum\final=0  %namely if we allow comments in the output
\newcommand{\jnote}[1]{{\color{blue}[{\tiny \textbf{Junpeng:} \bf #1}]\marginpar{\color{blue}*}}}
\newcommand{\ynote}[1]{{\color{purple}[{\tiny \textbf{Yuhang:} \bf #1}]\marginpar{\color{purple}*}}}
\else % in this case [final=1] we don't want any comments to show
\newcommand{\jnote}[1]{}
\newcommand{\ynote}[1]{}
\fi

\begin{document}

\title{Counterexamples to a treewidth conjecture on generalized Tur\'an problems}

\author[a,b,*]{Junpeng Zhou\,$^{\rm a,b}$, \ Xiying Yuan\,}

\affil[a]{\small \,Department of Mathematics, Shanghai University,
Shanghai 200444, PR China}
\affil[b]{\small \,Newtouch Center for Mathematics of Shanghai University,
Shanghai 200444, PR China}

\date{}

\maketitle

\footnotetext{*\textit{Corresponding author}.}
\footnotetext{
Email addresses:
\texttt{junpengzhou@shu.edu.cn} (J.~Zhou),
\texttt{xiyingyuan@shu.edu.cn} (X.~Yuan).}

\begin{abstract}
Given graphs $H$ and $F$, the generalized Tur\'{a}n number ${\rm ex}(n,H,F)$ is the maximum number of copies of $H$ in an $n$-vertex $F$-free graph. Alon and Shikhelman (J. Combin. Theory Ser. B, 2016) initiated the systematic study of generalized Tur\'{a}n problems. Recently, Gao, Wu and Xue (J. Graph Theory, 2026) asked whether every graph $F$ with chromatic number $\chi(F)=r\geq3$ and treewidth $\tw(F)\geq r$ satisfies $\ex(n,K_r,F)=\Omega(n^{r-1})$.
In this note, we give a negative answer to this question for every $r\geq3$. More precisely, we prove that the graph $F_r=K_{r-3}\vee H$, where $H$ is obtained from $K_4$ by subdividing one edge once, satisfies $\chi(F_r)=\tw(F_r)=r$ and
\[
n^{r-1}e^{-O(\sqrt{\log n})}\leq \ex(n,K_r,F_r)=o(n^{r-1}).
\]
This result also disproves Conjecture~6.3 in the recent survey of Gerbner and Palmer (Electron. J. Combin., 2026).
\end{abstract}

{\noindent{\bf Keywords:} generalized Tur\'an problem, treewidth, clique counting, triangle removal lemma}

{\noindent{\bf AMS (2020) subject classifications:} 05C35}

\section{\normalsize Introduction}\label{sec:intro}
Throughout this paper, all graphs are finite, simple and undirected. Given graphs $T$ and $F$, a graph $G$ is called \textit{$F$-free} if it does not contain a copy of $F$ as a subgraph. Let $\cN(T,G)$ denote the number of copies of $T$ in $G$. Let
$$
 \ex(n,T,F)=\max\bigl\{\cN(T,G): |V(G)|=n \text{ and } G \text{ is } F\text{-free}\bigr\}.
$$
Alon and Shikhelman \cite{AlSh} initiated the systematic study of the function $\ex(n,T,F)$, which is often called the \textit{generalized Tur\'{a}n problem}. When $T=K_2$, the generalized Tur\'{a}n number reduces to the classical Tur\'{a}n number, that is, $\ex(n,K_2,F)=\ex(n,F)$.
For a recent survey on generalized Tur\'{a}n problems, one can refer to the work of Gerbner and Palmer \cite{GePa}.

A graph is \textit{chordal} if it contains no induced cycle of length at least four. The \textit{treewidth} $\operatorname{tw}(G)$ of a graph $G$ is one less than the minimum clique number among all chordal graphs containing $G$ as a subgraph. %The \textit{treewidth} $\operatorname{tw}(G)$ of a graph $G$ is one less than the size of the largest clique in a chordal graph containing $G$ with the smallest clique number.
Recently, Gao, Wu and Xue \cite{GaWuXu} showed that if a graph $F$ satisfies $\operatorname{tw}(F)\leq r-1$, then $\operatorname{ex}(n,K_r,F)=O(n^{r-1})$. They also posed the following question.

\begin{prob}[Gao, Wu and Xue \cite{GaWuXu}]\label{prob1}
Is it true that if $\chi(F)=r\geq3$ and $\operatorname{tw}(F)\geq r$, then $\operatorname{ex}(n,K_r,F)=\Omega(n^{r-1})$?
\end{prob}

Gerbner and Palmer~\cite{GePa} subsequently reformulated Problem~\ref{prob1} as Conjecture~6.3 in their survey.

%\begin{con}[Gerbner and Palmer \cite{GePa}]\label{con1}
%If $F$ is a graph with $\chi(F)=r\geq3$ and $\operatorname{tw}(F)\geq r$, then
%\[
%\operatorname{ex}(n,K_r,F)=\Omega(n^{r-1}).
%\]
%\end{con}

In this note, we give counterexamples for every $r\geq3$. Let $H$ be the graph with vertex set $V(H)=\{1,2,3,4,5\}$ and edge set $E(H)=\{12,13,14,23,24,35,45\}$. Namely, $H$ is obtained from the copy of $K_4$ on $\{1,2,3,4\}$ by replacing the edge $34$ with the path $354$. For $r\geq3$, let $F_r=K_{r-3}\vee H$, where $\vee$ denotes the join of two vertex-disjoint graphs and $K_0$ denotes the empty graph (see Figure~\ref{fig:Fr}). Clearly, $|V(F_r)|=r+2$.

\begin{figure}[ht]
\centering
\begin{tikzpicture}[
 vertex/.style={
  circle,
  minimum size=3.6mm,
  inner sep=0pt
 },
 redvertex/.style={
  vertex,
  draw=red!75!black,
  fill=red!75!black
 },
 bluevertex/.style={
  vertex,
  draw=blue!75!black,
  fill=blue!75!black
 },
 greenvertex/.style={
  vertex,
  draw=green!55!black,
  fill=green!55!black
 },
 hedge/.style={
  draw=black!85,
  line width=0.9pt
 },
 joinedge/.style={
  draw=gray!55,
  line width=0.55pt
 }
]

% The clique K_{r-3}
\node[
 circle,
 draw=orange!80!black,
 fill=orange!12,
 minimum size=20mm,
 font=\large
] (K) at (-2.75,0) {$K_{r-3}$};

% Coordinates of H
\coordinate (v4) at (0,1.15);
\coordinate (v1) at (2.25,1.15);
\coordinate (v3) at (0,-1.15);
\coordinate (v2) at (2.25,-1.15);
\coordinate (v5) at (0,0);

% The complete join between K_{r-3} and H
\foreach \i in {1,2,3,4,5}{
 \draw[joinedge] (K.east) -- (v\i);
}

% Edges of H
\draw[hedge] (v4)--(v1)--(v2)--(v3);
\draw[hedge] (v1)--(v3);
\draw[hedge] (v4)--(v2);
\draw[hedge] (v4)--(v5)--(v3);

% A proper 3-coloring of H
\node[redvertex] (p1) at (v1) {};
\node[bluevertex] (p2) at (v2) {};
\node[greenvertex] (p3) at (v3) {};
\node[greenvertex] (p4) at (v4) {};
\node[redvertex] (p5) at (v5) {};

% Vertex labels
\node[inner sep=1pt,above right=0pt] at (p1.north east) {$1$};
\node[inner sep=1pt,below right=0pt] at (p2.south east) {$2$};
\node[inner sep=1pt,below left=0pt] at (p3.south west) {$3$};
\node[inner sep=1pt,above left=0pt] at (p4.north west) {$4$};
\node[inner sep=1pt,right=0pt] at (p5.east) {$5$};

\node at (1.12,-1.58) {$H$};

\end{tikzpicture}
\caption{{\small The graph $F_r$}}
\label{fig:Fr}
\end{figure}

Our main result is the following.

\begin{thm}\label{thm:main}
For every integer $r\geq3$, the graph $F_r$ satisfies $\chi(F_r)=\tw(F_r)=r$. Moreover,
\[
n^{r-1}e^{-O(\sqrt{\log n})}\leq \ex(n,K_r,F_r)=o(n^{r-1}).
\]
\end{thm}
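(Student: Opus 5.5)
First I verify the parameters. $H$ has the proper $3$-colouring shown in Figure~\ref{fig:Fr}, and it contains the triangle $123$, so $\chi(H)=3$. Joining $K_{r-3}$ adds exactly $r-3$ colours, giving $\chi(F_r)=r$.

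For the treewidth, adding the edge $34$ to $H$ yields a chordal graph: it is $K_4$ on $\{1,2,3,4\}$ with vertex $5$ attached to the clique edge $34$. Its clique number is $4$, so $\tw(H)\le 3$. Conversely, contracting the edge $35$ gives $K_4$, so $\tw(H)\ge 3$. Joining a clique $K_{r-3}$ raises treewidth by exactly $r-3$. For the upper bound, add the clique vertices to every bag. For the lower bound, minors of $F_r$ include $K_{r-3}\vee K_4=K_{r+1}$, so $\tw(F_r)\ge r$. Hence $\tw(F_r)=r$.

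For the upper bound $o(n^{r-1})$, I reduce to the case $r=3$ by a link argument, then use removal. Let $G$ be $F_r$-free. The number of $K_r$'s equals $\binom{r}{3}^{-1}$ times the sum, over $(r-3)$-cliques $S$, of the number of triangles in the common neighbourhood $N(S)$. The graph $G[N(S)]$ must be $H$-free, since otherwise joining with $S$ gives $F_r$.

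So the key step is $\ex(n,K_3,H)=o(n^2)$. In an $H$-free graph, consider any edge $12$ lying in triangles $123$ and $124$ with $3\ne 4$. If some vertex $5\notin\{1,2\}$ were adjacent to both $3$ and $4$, we would get $H$. So the triangles on an edge are controlled by common-neighbourhood conditions, and I expect the sharper fact that every edge lies in few triangles.

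I would aim for the following: if the edge $12$ lies in at least $3$ triangles, then the apexes $3,4,\dots$ pairwise have no common neighbour outside $\{1,2\}$. I then want to find a subgraph $G'$ obtained by deleting $o(n^2)$ edges in which every edge is in at most one triangle, so that $G'$ has at most $n^2$ triangles. The triangle removal lemma then finishes the argument: since each edge of $G'$ lies in one triangle, $G'$ has $o(n^3)$ triangles and hence can be made triangle-free by removing $o(n^2)$ edges. This is \emph{not} directly the claim I want, since I need the number of triangles to be $o(n^2)$, not just the number of edges removed.

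The cleaner route is a different count: triangles in $G$ number at most $(1/3)\sum_e t(e)$, where $t(e)$ is the number of triangles on $e$.

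This is the main obstacle: turning $H$-freeness into an edge-triangle structure amenable to the Ruzsa--Szemer\'edi $(6,3)$-theorem. My first attempt would be to show that in an $H$-free graph the triangles form a linear-ish $3$-uniform hypergraph after $o(n^2)$ deletions, and then apply $(6,3)$ / removal to get $o(n^2)$ triangles.

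Then summing over $S$ is the remaining step. Here I would count $K_r$'s via $(r-2)$-cliques $T$, whose common neighbourhood spans edges forming $K_r$'s. A cleaner version is to apply the hypergraph removal lemma directly to the $r$-graph of $K_r$'s, using the analogous statement that each $K_{r-1}$ extends to few $K_r$'s after deletion. This gives $o(n^{r-1})$.

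For the lower bound I use the Ruzsa--Szemer\'edi/Behrend construction. Take an $r$-partite graph with parts $\mathbb{Z}_m$, where the $r$-cliques are the "lines" $\{x+i\,a\}$ in part $i$ with $a\in A$, $A$ a Behrend set with no nontrivial solution to the relevant linear equations. This gives $n^{r-1}e^{-O(\sqrt{\log n})}$ cliques such that every $K_r$ is a designated line and each $(r-1)$-subset lies in a unique $K_r$ (the Alon--Shikhelman style construction).

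It then remains to show the construction is $F_r$-free. Any copy of $F_r$ is $r$-partite-embedded, so $5$ and one of $1,2$ share a part. The triangles $123$, $124$ with the $K_{r-3}$ give two $K_r$'s sharing an $(r-1)$-clique; uniqueness forces them to coincide, which contradicts $3\ne4$ unless $3$ and $4$ are in different parts. The path $354$ then closes a "cycle" of lines, which Behrend's set forbids. I expect this freeness check, choosing the right equation to avoid, to be fiddly but standard.
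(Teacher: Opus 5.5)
Your computation of $\chi(F_r)$ and $\tw(F_r)$ is correct. Contracting $35$ gives a $K_4$ minor of $H$, hence a $K_{r+1}$ minor of $F_r$, which is a valid alternative to the paper's induced $4$-cycle argument. Your link identity over $(r-3)$-cliques is exactly the paper's reduction.

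\textbf{Upper bound.} The key lemma $\ex(n,K_3,H)=o(n^2)$ is never proved; you label it the main obstacle, and the routes you sketch do not work as stated.
\begin{itemize}
\item Your expectation that every edge lies in few triangles is false: the book $K_2\vee\overline{K_m}$ is $H$-free, and one edge lies in $n-2$ triangles.
\item ``Deleting $o(n^2)$ edges'' is the wrong currency. What must be bounded is the number of \emph{triangles} destroyed by the pruning.
\item Your worry about the removal lemma is misplaced. Once every edge lies in fewer than $N$ triangles, any $\varepsilon n^2$ triangles contain about $\varepsilon n^2/(3N)$ edge-disjoint ones. The graph has only $O(n^2)=o(n^3)$ triangles, so the removal lemma does force $o(n^2)$ triangles.
\end{itemize}
The missing step is already latent in your own observation. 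It says that a pair $\{3,4\}$ lies in $N_G(e)$ for at most one edge $e$. So the families $\binom{N_G(e)}{2}$ are pairwise disjoint and $\sum_e\binom{d_G(e)}{2}\le\binom n2$. Hence the edges with $d_G(e)\ge N$ lie in at most $n^2/(N-1)$ triangles in total, and deleting them is cheap. With the resulting uniform bound $\ex(m,K_3,H)\le\varepsilon m^2+C_\varepsilon$, the sum over $S$ is immediate. No removal argument for the $r$-graph of $K_r$'s is needed.

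\textbf{Lower bound.} Your construction fails for every $r\ge4$. Your cliques are the lines $\{(i,x+ia)\}$ indexed by $(x,a)\in\mathbb{Z}_m\times A$, and you require every $K_r$ to be such a line. So the graph has at most $m|A|\le m^2=O(n^2)$ copies of $K_r$. This matches $n^{r-1}e^{-O(\sqrt{\log n})}$ only when $r=3$. To get $n^{r-1-o(1)}$ cliques with every $K_{r-1}$ in at most one $K_r$, the cliques need roughly $r-1$ independent parameters with a Behrend-type restriction. That is precisely the nontrivial theorem of Gowers and Janzer which the paper cites; it is not a routine variant of the Ruzsa--Szemer\'edi construction.

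Given such a graph, $F_r$-freeness is immediate. The sets $X\cup\{1,2,3\}$ and $X\cup\{1,2,4\}$ are two $K_r$'s in $F_r$ sharing the $K_{r-1}$ on $X\cup\{1,2\}$, so $B_{r,r-1}\subseteq F_r$. Your discussion of parts and of the path $354$ plays no role. There is also no exception ``unless $3$ and $4$ lie in different parts'': the two cliques are distinct regardless of the parts.
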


Theorem~\ref{thm:main} gives a negative answer to Problem~\ref{prob1} for every $r\geq3$, and thereby disproves Conjecture~6.3 in the recent survey of Gerbner and Palmer~\cite{GePa}.

\begin{rem}
Note that the graph $F_r$ has the minimum possible order among graphs satisfying the assumptions of Problem~\ref{prob1}. Indeed, let $F$ be an $r$-chromatic graph with at most $r+1$ vertices. If $|V(F)|=r$, then $F=K_r$ and $\tw(F)=r-1$. If $|V(F)|=r+1$, then $F\neq K_{r+1}$, and thus $F$ is a subgraph of $K_{r+1}-e$ for some edge $e$. Since $K_{r+1}-e$ is chordal with clique number $r$, we have $\tw(F)\leq r-1$ in either case.
\end{rem}

\begin{rem}
The lower bound in Theorem~\ref{thm:main} shows that there exists a constant $c_r>0$ such that
\[
\ex(n,K_r,F_r)\geq n^{r-1}e^{-c_r\sqrt{\log n}}
\]
for all sufficiently large $n$. It follows that for every fixed $\varepsilon>0$,
\[
\frac{n^{r-1}e^{-c_r\sqrt{\log n}}}{n^{r-1-\varepsilon}}=e^{\varepsilon\log n-c_r\sqrt{\log n}}\longrightarrow\infty.
\]
This implies that although $\ex(n,K_r,F_r)=o(n^{r-1})$, no upper bound of the form $O(n^{r-1-\varepsilon})$ holds for any fixed $\varepsilon>0$.
\end{rem}

The rest of the paper is organized as follows. In Section~\ref{sec:triangles}, we prove a triangle-counting lemma for $H$-free graphs that will be used in the proof of Theorem~\ref{thm:main}. In Section~\ref{sec:proof}, we prove Theorem~\ref{thm:main}. Finally, we conclude with several remarks in Section \ref{sec:remarks}.

\section{\normalsize Counting triangles in \texorpdfstring{$H$}{H}-free graphs}\label{sec:triangles}
For a graph $G$ and a vertex $x\in V(G)$, let $N_G(x)$ denote the \textit{neighborhood} of $x$ in $G$. For an edge $e=xy$ of $G$, let $N_G(e):=N_G(x)\cap N_G(y)$ and $d_G(e):=|N_G(e)|$. Note that $d_G(e)$ is exactly the number of triangles in $G$ containing the edge $e$.

Recall that the triangle removal lemma of Ruzsa and Szemer\'edi~\cite{RuSz} can be stated as follows (see also~\cite[Theorem~1.1]{GiShWi}).

\begin{lem}[Ruzsa and Szemer\'edi~\cite{RuSz}]\label{lem:removal}
If an $n$-vertex graph contains $o(n^3)$ triangles, then it can be made triangle-free by deleting $o(n^2)$ edges.
\end{lem}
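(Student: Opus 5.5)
The statement to prove is Lemma~\ref{lem:removal}, the triangle removal lemma of Ruzsa and Szemer\'edi. The paper cites it rather than proving it, but the standard route is through Szemer\'edi's regularity lemma combined with the triangle counting lemma.

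Fix $\varepsilon>0$ and let $G$ be an $n$-vertex graph with $o(n^3)$ triangles. It suffices to show that for $n$ large, $G$ can be made triangle-free by deleting at most $\varepsilon n^2$ edges.

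First, apply the regularity lemma with parameter $\eta=\eta(\varepsilon)$, taken much smaller than $\varepsilon$. This gives an equipartition $V_1,\dots,V_k$ with $1/\eta\le k\le M(\eta)$ in which all but $\eta k^2$ pairs are $\eta$-regular.

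Next, delete the following edges:
\begin{itemize}
\item all edges inside the parts, at most $k\cdot (n/k)^2\le \eta n^2$ edges, since $k\ge 1/\eta$;
\item all edges between irregular pairs, at most $\eta k^2 (n/k)^2=\eta n^2$ edges;
\item all edges between pairs of density less than $\varepsilon/4$, at most $(\varepsilon/4)\binom{k}{2}(n/k)^2\le \varepsilon n^2/8$ edges.
\end{itemize}
With $\eta\le\varepsilon/8$, the total number of deleted edges is below $\varepsilon n^2$.

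Now suppose a triangle survives. Its three vertices lie in three distinct parts $V_a,V_b,V_c$, and each of the three pairs among them is $\eta$-regular with density at least $\varepsilon/4$. Since the deleted edges did not touch these pairs, the triangle counting lemma applies inside the original $G$. Provided $\eta\ll\varepsilon$ (say $\eta\le \varepsilon/8$ is enough to make $2\eta<$ the densities), it gives at least
\[
(1-2\eta)\left(\tfrac{\varepsilon}{4}-\eta\right)^3 \left(\tfrac{n}{k}\right)^3 \ \ge\ c(\varepsilon)\,n^3
\]
triangles, where $c(\varepsilon)>0$ depends only on $\varepsilon$ because $k\le M(\eta(\varepsilon))$. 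This contradicts the assumption that $G$ has $o(n^3)$ triangles once $n$ is large.

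The main obstacle is the counting lemma itself. The plan for it is the usual argument:
\begin{itemize}
\item by regularity, all but $2\eta|V_a|$ vertices of $V_a$ have at least $(d-\eta)|V_b|$ neighbours in $V_b$ and at least $(d-\eta)|V_c|$ neighbours in $V_c$;
\item these two neighbourhoods have size above $\eta$ times the part sizes, so regularity of the pair $(V_b,V_c)$ gives at least $(d-\eta)^3|V_b||V_c|$ edges between them.
\end{itemize}
Everything else is bookkeeping of the constants, which depend only on $\varepsilon$.
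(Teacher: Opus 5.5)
The paper does not prove this lemma; it simply quotes it from Ruzsa and Szemer\'edi. Your argument is the standard regularity-lemma proof, which is essentially how they obtained it: remove edges inside parts, across irregular pairs and across sparse pairs, then apply the triangle counting lemma to any surviving triangle. It is correct. What remains is routine bookkeeping:
\begin{itemize}
\item Parts of size $\lceil n/k\rceil$, or an exceptional set in some versions of the regularity lemma, only change the deleted-edge bounds by constant factors.
\item Take $\eta<\varepsilon/8$ strictly, so that the neighbourhoods in the counting lemma meet the size threshold in whichever form of the regularity definition you use.
\end{itemize}
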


We now establish the following bound on the number of triangles in $H$-free graphs.

\begin{lem}\label{lem:triangle}
For every $\varepsilon>0$, there exists an integer $n_0$ such that
\[
\ex(n,K_3,H)<\varepsilon n^2
\]
for every $n\geq n_0$.
\end{lem}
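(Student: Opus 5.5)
We argue by contradiction via Lemma~\ref{lem:removal}, applied not to $G$ itself but to a subgraph in which every edge lies in few triangles. The structural fact about $H$ we use is this: if an edge $e=12$ of $G$ has two nonadjacent common neighbours $3,4$, and $3,4$ have a common neighbour $5\notin\{1,2\}$, then $G$ contains $H$. Indeed, $\{1,2,3,4\}$ then spans $K_4$ minus the edge $34$, and $5$ supplies the path $354$.

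Let $G$ be an $n$-vertex $H$-free graph. For an edge $e=xy$ with $d_G(e)\ge 2$, consider two vertices $u,w\in N_G(e)$.

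\begin{itemize}
\item If $uw\notin E(G)$, then every common neighbour of $u$ and $w$ lies in $\{x,y\}$, since otherwise $G$ contains $H$.
\item If $uw\in E(G)$, then $\{x,y,u,w\}$ spans a $K_4$.
\end{itemize}

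Now $K_4$ plus a vertex adjacent to two of its vertices already contains $H$: take the $K_4$ on $\{1,2,3,4\}$ and a fifth vertex adjacent to $3$ and $4$. Hence in an $H$-free graph no vertex outside a $K_4$ has two neighbours inside it. In particular, if $N_G(e)$ contains an edge $uw$, then no further vertex of $N_G(e)$ is adjacent to $u$ or $w$.

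Put $A=N_G(e)$. The previous paragraph shows that $G[A]$ is a matching. If $a,b\in A$ are nonadjacent, their common neighbours are only $x$ and $y$. So the pairs of $A$ have pairwise essentially disjoint common neighbourhoods outside $\{x,y\}$. Each vertex $v\notin A\cup\{x,y\}$ is a common neighbour of at most one nonadjacent pair in $A$, and hence has at most about $2$ neighbours in $A$ (up to the matching structure). Counting paths of length two from $A$ through vertices outside $A\cup\{x,y\}$ gives no direct bound on $|A|$, however, since $A$ may well be large, as in $K_{2,n}$ plus an edge. So the number of triangles is not bounded this way, and the removal lemma must be used.

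Suppose for contradiction that $G$ is $H$-free with $\mathcal{N}(K_3,G)\ge\varepsilon n^2$. Call an edge \emph{heavy} if $d_G(e)\ge \delta n$, where $\delta$ is small. Each vertex lies in few $K_4$'s by the $K_4$ fact, so the triangles are mostly of the following form: $e$ together with vertices of $N_G(e)$ that are pairwise nonadjacent. The key counting step is as follows. If $e=xy$ is heavy, then the sets $N_G(u)\setminus\{x,y\}$ for $u\in N_G(e)$ (outside the matching) are pairwise disjoint, so $\sum_{u\in N(e)}(d(u)-2)\le n$ up to matching corrections. Summing over all edges, each triangle $xyu$ contributes a weight $\ge 1/(d(u)-2)$ for its edge $xy$. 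I expect the cleanest route is the following.

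\begin{itemize}
\item Observe that $\sum_{e}d_G(e)=3\mathcal N(K_3,G)$.
\item Bound $\sum_e d_G(e)$ via the fact that for each vertex $u$, the neighbourhood $G[N(u)]$ has no vertex of degree $\ge 2$ whose two neighbours have a further common neighbour, and so on.
\end{itemize}

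This local analysis is the main obstacle. As a fallback I would use Lemma~\ref{lem:removal} contrapositively. Since $\mathcal N(K_3,G)\ge\varepsilon n^2$ is only $o(n^3)$, the removal lemma gives a set of $o(n^2)$ edges hitting all triangles. That is the wrong direction for a contradiction, so the proof would instead need a supersaturation-style count showing that $\varepsilon n^2$ triangles force some vertex pair with $\Omega(n)$ common neighbours that are pairwise nonadjacent, and then produce $H$ from a second common neighbour. I have not verified that this step goes through.
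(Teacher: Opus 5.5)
Your proposal has a genuine gap. You say yourself that the ``local analysis is the main obstacle'' and that the fallback is unverified. Both the heavy/light split you set up (threshold $\delta n$) and the fallback would fail.

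\textbf{The missing counting step.} The missing idea is a \emph{global} reading of a local fact you already have. Your two bullet points together show the following: for any edge $e=xy$ and any two vertices $u,w\in N_G(e)$, the common neighbourhood of $u$ and $w$ is exactly $\{x,y\}$. The adjacent/nonadjacent case distinction is unnecessary, since a copy of $H$ need not be induced. Consequently an unordered pair $\{u,w\}$ lies in $N_G(e)$ for at most one edge $e$. So the families $\binom{N_G(e)}{2}$ are pairwise disjoint, and
\[
\sum_{e\in E(G)}\binom{d_G(e)}{2}\le\binom n2 .
\]
This is exactly the paper's key claim. The paper proves it directly: if $3,4\in N_G(e)\cap N_G(f)$ with $e=12\neq f$, then an endpoint $5$ of $f$ outside $\{1,2\}$ completes a copy of $H$. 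Per-edge estimates cannot replace this, as your own book example shows; this includes bounding $|A|$ and $\sum_{u\in A}(d(u)-2)\le n$. (Incidentally, the same fact shows that a vertex outside $A\cup\{x,y\}$ has at most one neighbour in $A$, not ``about $2$''.)

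\textbf{How the removal lemma should be used.} Lemma~\ref{lem:removal} is not ``in the wrong direction''; it must be applied with a \emph{constant} threshold $N$ rather than $\delta n$.
\begin{enumerate}
\item Triangles through an edge with $d_G(e)\ge N$ number at most
\[
\sum_{d_G(e)\ge N}d_G(e)\le\frac{2}{N-1}\sum_e\binom{d_G(e)}2\le\frac{n^2}{N-1}.
\]
\item In $G_N=G-\{e: d_G(e)\ge N\}$, every edge lies in fewer than $N$ triangles. Hence $\mathcal{N}(K_3,G_N)\le (N-1)n^2/6=o(n^3)$.
\item Lemma~\ref{lem:removal} then gives a set $D$ of $o(n^2)$ edges meeting every triangle of $G_N$. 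Each edge of $D$ kills at most $N-1$ triangles, so $\mathcal{N}(K_3,G_N)\le (N-1)|D|=o(n^2)$. The paper phrases this step via $\Omega(n^2)$ greedily chosen edge-disjoint triangles, which amounts to the same thing.
\item Choosing $N$ with $1/(N-1)<\varepsilon/2$ finishes the proof.
\end{enumerate}

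\textbf{Why your version fails.} With your threshold $\delta n$, the light part only yields a bound of order $\delta n\cdot o(n^2)$, which says nothing. Your supersaturation fallback is also unsupported. A pair with $\Omega(n)$ pairwise nonadjacent common neighbours does not by itself produce $H$: the book $K_2\vee\overline{K_{n-2}}$ has such a pair and is $H$-free. You give no mechanism for the additional structure that would be needed.
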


\begin{proof}[\bf Proof]
Let $G$ be an $n$-vertex $H$-free graph. We claim that for any two distinct edges $e,f\in E(G)$,
\begin{equation*}
|N_G(e)\cap N_G(f)|\leq1.
\end{equation*}
Label the endpoints of $e$ by $1$ and $2$. Suppose to the contrary that $N_G(e)\cap N_G(f)$ contains two distinct vertices, say $3$ and $4$. Since $f\neq e$, one endpoint of $f$ (say $5$) lies outside $\{1,2\}$. Moreover, $5\notin \{3,4\}$, since neither endpoint of $f$ belongs to $N_G(f)$. Hence, the seven edges $12,13,14,23,24,35,45$ form a copy of $H$, which contradicts that $G$ is $H$-free.

By the above claim, the families $\binom{N_G(e)}{2}$ for $e\in E(G)$ are pairwise disjoint. Therefore,
\begin{equation*}
\sum_{e\in E(G)}\binom{d_G(e)}2\leq \binom n2.
\end{equation*}

For an integer $N\geq2$, let
\[
 E_N=\{e\in E(G): d_G(e)\geq N\}.
\]
The number of triangles containing at least one edge of $E_N$ is at most
\begin{align}\label{eq2.1}
\sum_{e\in E_N}d_G(e)&\leq \sum_{e\in E_N}\frac{d_G(e)(d_G(e)-1)}{N-1}= \frac{2}{N-1}\sum_{e\in E_N}\binom{d_G(e)}2 \notag\\
&\leq \frac{2}{N-1}\binom{n}{2}= \frac{n(n-1)}{N-1}.
\end{align}

Let $G_N:=G-E_N$. Then every edge $e\in E(G_N)$ satisfies $d_{G_N}(e)\leq d_G(e)<N$. We claim that for every fixed $N$,
\begin{equation}\label{eq2.2}
\cN(K_3,G_N)=o(n^2).
\end{equation}
Suppose otherwise. Then there exist $\varepsilon>0$ and a sequence of such graphs satisfying $\cN(K_3,G_N)\geq\varepsilon n^2$. Since every edge of $G_N$ belongs to fewer than $N$ triangles, a greedy procedure yields at least $\lfloor\varepsilon n^2/(3N)\rfloor$ edge-disjoint triangles. On the other hand,
\[
3\cN(K_3,G_N)= \sum_{e\in E(G_N)}d_{G_N}(e)\leq |E(G_N)|\cdot(N-1)=O(n^2)=o(n^3).
\]
By Lemma \ref{lem:removal}, there is a set of $o(n^2)$ edges whose deletion makes $G_N$ triangle-free. However, such a set must contain at least one edge from each of the $\lfloor\varepsilon n^2/(3N)\rfloor$ edge-disjoint triangles, which is a contradiction. This proves~\eqref{eq2.2}.

Combining~\eqref{eq2.1} and~\eqref{eq2.2}, we obtain that for every fixed $N\geq2$,
\[
\cN(K_3,G)\leq \frac{n(n-1)}{N-1}+o(n^2).
\]
Thus, $\ex(n,K_3,H)\leq \frac{n(n-1)}{N-1}+o(n^2)$. Let $\varepsilon>0$ and fix an integer $N\geq2$ such that $1/(N-1)<\varepsilon/2$. For this fixed $N$, the second term is at most $\varepsilon n^2/2$ for all sufficiently large $n$. Hence, $\ex(n,K_3,H)<\varepsilon n^2$ for all sufficiently large $n$. Since $\varepsilon>0$ is arbitrary, the result follows.
\end{proof}

\section{\normalsize Proof of Theorem~\ref{thm:main}}\label{sec:proof}
For a graph $G$ and a vertex set $U\subseteq V(G)$, let $G[U]$ be the subgraph of $G$ induced by $U$. First, we prove the structural properties of $F_r$.

\begin{prop}\label{prop:parameters}
For every $r\geq3$, the graph $F_r$ satisfies
$\chi(F_r)=\tw(F_r)=r$.
\end{prop}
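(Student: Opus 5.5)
We reduce both parameters to the case $r=3$, i.e.\ to the graph $H$, using the behaviour of $\chi$ and $\tw$ under joining with a clique. For the chromatic number, $\chi(K_{r-3}\vee H)=(r-3)+\chi(H)$, since the two parts of a join must use disjoint colour sets. So it suffices to show $\chi(H)=3$. The colouring in Figure~\ref{fig:Fr} (classes $\{1,5\}$, $\{2\}$, $\{3,4\}$) is proper, and $\{1,2,3\}$ is a triangle, so $\chi(H)=3$ and $\chi(F_r)=r$.

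For treewidth, we use the join identity $\tw(K_m\vee G)=m+\tw(G)$.

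\emph{Upper bound.} If $G'\supseteq G$ is chordal with clique number $\tw(G)+1$, then $K_m\vee G'$ is chordal. Indeed, an induced cycle of length at least four cannot use a vertex of $K_m$, because such a vertex is adjacent to all others, so the cycle would lie in $G'$. Moreover, $K_m\vee G'$ has clique number $m+\tw(G)+1$.

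\emph{Lower bound.} Let $C\supseteq K_m\vee G$ be chordal with minimum clique number. The induced subgraph $C[V(G)]$ is chordal and contains $G$, so it has a clique of size at least $\tw(G)+1$. Every vertex of $K_m$ is adjacent in $C$ to all of $V(G)$ and to each other, so adding them gives a clique of size $m+\tw(G)+1$.

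Hence $\tw(F_r)=r-3+\tw(H)$, and it remains to prove $\tw(H)=3$.

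\emph{$\tw(H)\le3$.} Add the edge $34$ to $H$. The resulting graph is $K_4$ on $\{1,2,3,4\}$ together with vertex $5$ adjacent to $3$ and $4$. It is chordal, since it has the perfect elimination ordering $5,1,2,3,4$, and its clique number is $4$.

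\emph{$\tw(H)\ge3$.} This is the main step. Suppose $C\supseteq H$ is chordal with clique number at most $3$. Consider the $4$-cycle $3\,5\,4\,1$ (edges $35,54,41,13$). In a chordal graph this cycle needs a chord, either $34$ or $15$.
\begin{itemize}
\item If $34\in E(C)$, then $\{1,2,3,4\}$ is a $K_4$ in $C$, a contradiction.
\item Otherwise $15\in E(C)$. By the same argument applied to the $4$-cycle $3\,5\,4\,2$, with $34$ absent we also get $25\in E(C)$. Now $\{1,2,3,5\}$ is a clique in $C$, since $12,13,23,15,25,35$ are all edges. This is again a contradiction.
\end{itemize}
Thus $\tw(H)=3$, so $\tw(F_r)=r$.

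Alternatively, $\tw(H)\ge3$ follows because contracting the edge $35$ yields a $K_4$ minor and treewidth is minor-monotone. The chord argument above is self-contained with respect to the chordal-graph definition used in the paper, so that is the route we take.
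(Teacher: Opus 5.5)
Your proposal is correct and follows essentially the same route as the paper: the same proper $3$-colouring plus a triangle for $\chi(H)=3$, the same chordal completion $H+34$ for $\tw(H)\le 3$, and the same chord argument on the two $4$-cycles forcing a $4$-clique on $\{1,2,3,4\}$ or $\{1,2,3,5\}$. The only difference is presentational: you isolate the join step from $H$ to $F_r$ as the general identity $\tw(K_m\vee G)=m+\tw(G)$, and you explicitly justify that the join is chordal, which the paper merely asserts.
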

\begin{proof}[\bf Proof]
Note that the triangles $123$ and $124$ show that $\chi(H)\geq3$. Conversely, assign distinct colors to $1,2,3$, and give $4$ and $5$ the colors of $3$ and $1$, respectively. This is a proper $3$-coloring of $H$, so $\chi(H)=3$. Thus, by the definition of $F_r$,
\[
 \chi(F_r)=\chi(K_{r-3})+\chi(H)=r.
\]

It remains to determine the treewidth of $F_r$. Let $H'$ be obtained from $H$ by adding the edge $34$. Then $H'$ is a chordal graph whose maximal cliques are $\{1,2,3,4\}$ and $\{3,4,5\}$. Hence, $\tw(H)\leq3$. Conversely, let $J$ be any chordal graph containing $H$. The cycles $13541$ and $23542$ are induced $4$-cycles in $H$. Since $J$ is chordal, it contains either the edge $15$ or the edge $34$, and either the edge $25$ or the edge $34$. If $34\in E(J)$, then $\{1,2,3,4\}$ is a clique in $J$. Otherwise, both $15$ and $25$ belong to $E(J)$, and hence $\{1,2,3,5\}$ is a clique in $J$. Thus, every chordal graph containing $H$ has clique number at least $4$, which gives $\tw(H)\geq3$. Therefore, $\tw(H)=3$.

Let $X$ be the vertex set of the copy of $K_{r-3}$ in $F_r$. Observe that the graph $K_{r-3}\vee H'$ is chordal and has clique number $r+1$. Hence, $\tw(F_r)\leq r$. Conversely, if $J$ is any chordal graph containing $F_r$, then $J[V(H)]$ is a chordal graph containing $H$, and thus contains a clique of size $4$. Together with $X$, this clique forms a clique of size $r+1$ in $J$. Thus, $\tw(F_r)\geq r$. This completes the proof.
\end{proof}

\subsection{\normalsize The upper bound}
First, we establish the upper bound on the number of copies of $K_r$ in $F_r$-free graphs.

\begin{prop}\label{prop:upper}
For every integer $r\geq3$,
\[
 \ex(n,K_r,F_r)=o(n^{r-1}).
\]
\end{prop}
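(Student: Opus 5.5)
The plan is induction on $r$, using link graphs, with Lemma~\ref{lem:triangle} as the base case $r=3$ (since $F_3=H$). For $r\geq4$, note that $F_r=K_1\vee F_{r-1}$. Let $G$ be an $n$-vertex $F_r$-free graph. For each vertex $v$, the link graph $G[N_G(v)]$ is $F_{r-1}$-free: a copy of $F_{r-1}$ inside $N_G(v)$, joined to $v$, would be a copy of $F_r$ in $G$.

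The key counting identity is
\[
r\,\cN(K_r,G)=\sum_{v\in V(G)}\cN\bigl(K_{r-1},G[N_G(v)]\bigr),
\]
because each copy of $K_r$ is counted once at each of its $r$ vertices. By the induction hypothesis,
\[
\cN\bigl(K_{r-1},G[N_G(v)]\bigr)\le \ex\bigl(|N_G(v)|,K_{r-1},F_{r-1}\bigr)\le \ex(n,K_{r-1},F_{r-1}),
\]
using that $\ex(\cdot,K_{r-1},F_{r-1})$ is monotone in the number of vertices (pad with isolated vertices). This holds even when $|N_G(v)|$ is small. Summing over the $n$ vertices then gives
\[
\cN(K_r,G)\le \frac{n}{r}\,o(n^{r-2})=o(n^{r-1}).
\]

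To make the $o(\cdot)$ uniform, I would state the induction as: for every $\varepsilon>0$ there is $n_0$ such that $\ex(m,K_{r-1},F_{r-1})<\varepsilon m^{r-2}$ for all $m\ge n_0$. For $m<n_0$, the count is at most $\binom{m}{r-1}$, which is bounded by a constant $C(n_0)$. So each vertex contributes at most $\max\{\varepsilon n^{r-2},C(n_0)\}$, and the sum over vertices is at most $\varepsilon n^{r-1}+C(n_0)n$. This is below $2\varepsilon n^{r-1}$ for large $n$ because $r-1\ge 2$.

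There is no real obstacle here, since the removal-lemma input is already packaged in Lemma~\ref{lem:triangle}. The only points needing care are the join decomposition $K_{r-3}\vee H=K_1\vee(K_{r-4}\vee H)$ and the uniformity bookkeeping above.
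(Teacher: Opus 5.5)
Your proof is correct and is essentially the paper's argument. The paper reduces to Lemma~\ref{lem:triangle} in one step: it takes the common neighbourhoods $G_S$ of all $(r-3)$-cliques $S$, which are $H$-free, and uses $\binom{r}{r-3}\cN(K_r,G)=\sum_S \cN(K_3,G_S)$. You instead peel off one vertex at a time via $F_r=K_1\vee F_{r-1}$ and induct on $r$, which unrolls to the same computation.
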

\begin{proof}[\bf Proof]
For convenience, set $s:=r-3$. Let $G$ be an $n$-vertex $F_r$-free graph. For each $s$-clique $S$ of $G$, let
\[
 G_S=G\left[\bigcap_{z\in V(S)}N_G(z)\right].
\]
When $s=0$, we take $S=\emptyset$ and $G_S=G$. The graph $G_S$ is $H$-free. Indeed, if $G_S$ contained a copy of $H$, then this copy together with $S$ would form a copy of $F_r$ in $G$, which is a contradiction.

Let $\cK_s(G)$ be the family of $s$-cliques of $G$. By double-counting the pairs $(S,R)$, where $S\in\cK_s(G)$ and $R$ is a copy of $K_r$ in $G$ containing $S$, we obtain
\begin{equation}\label{eq:lift-identity}
 \binom rs\cN(K_r,G)
 =\sum_{S\in\cK_s(G)}\cN(K_3,G_S).
\end{equation}
Indeed, for each $S\in\cK_s(G)$, the copies of $K_r$ containing $S$ are in one-to-one correspondence with the triangles in $G_S$.

Let $\varepsilon>0$. By Lemma~\ref{lem:triangle}, there exists an integer $m_0$ such that every $m$-vertex $H$-free graph with $m\geq m_0$ contains at most $\varepsilon m^2$ triangles. If $|V(G_S)|\geq m_0$, then $\cN(K_3,G_S)\leq\varepsilon n^2$. If $|V(G_S)|<m_0$, then $\cN(K_3,G_S)<\binom{m_0}{3}$. Hence, for every $S\in\cK_s(G)$,
\[
 \cN(K_3,G_S)\leq\varepsilon n^2+\binom{m_0}{3}.
\]
It follows from~\eqref{eq:lift-identity} and $|\cK_s(G)|\leq\binom ns$ that
\[
 \cN(K_r,G)\leq \frac{\binom ns}{\binom rs}\left(\varepsilon n^2+\binom{m_0}{3}\right)= \frac{\binom {n}{r-3}}{\binom {r}{r-3}}\left(\varepsilon n^2+\binom{m_0}{3}\right).
\]
Thus,
\begin{equation*}
\ex(n,K_r,F_r)\leq \frac{\binom {n}{r-3}}{\binom {r}{r-3}}\left(\varepsilon n^2+\binom{m_0}{3}\right)=\frac{\varepsilon n^{r-1}}{(r-3)!\binom{r}{r-3}}+o(n^{r-1}).
\end{equation*}
Since $\varepsilon>0$ can be chosen arbitrarily small, the proposition follows. %Since $\varepsilon>0$ is arbitrary, the proposition follows.
\end{proof}

\subsection{\normalsize The lower bound}
Let $B_{r,r-1}$ be the graph formed by two copies of $K_r$ sharing exactly $r-1$ vertices. Gowers and Janzer~\cite{GoJa} proved that for every integer $r\geq3$, there is an $n$-vertex graph $G$ with
\[
n^{r-1} e^{-O(\sqrt{\log n})}=n^{r-1-o(1)}
\]
copies of $K_r$ such that every copy of $K_{r-1}$ in $G$ is contained in at most one copy of $K_r$. In particular, $G$ is $B_{r,r-1}$-free. Otherwise, some copy of $K_{r-1}$ would be contained in two distinct copies of $K_r$.

\begin{proof}[\bf Proof of Theorem~\ref{thm:main}]
Note that the two triangles $123$ and $124$ in $H$ share exactly the edge $12$. Together with the copy of $K_{r-3}$, they form two copies of $K_r$ in $F_r$ sharing exactly $r-1$ vertices. Namely, $F_r$ contains a copy of $B_{r,r-1}$. Therefore, by the above result of Gowers and Janzer~\cite{GoJa}, we obtain
\[
 \ex(n,K_r,F_r)\geq  \ex(n,K_r,B_{r,r-1})\geq n^{r-1} e^{-O(\sqrt{\log n})}.
\]
The upper bound follows from Proposition~\ref{prop:upper}, and Proposition~\ref{prop:parameters} establishes that $\chi(F_r)=\tw(F_r)=r$. This completes the proof.
\end{proof}

\section{\normalsize Concluding remarks}\label{sec:remarks}
In this note, we construct counterexamples that give a negative answer to Problem~\ref{prob1} posed by Gao, Wu and Xue \cite{GaWuXu}, and disprove Conjecture~6.3 of Gerbner and Palmer \cite{GePa}. In particular, these counterexamples satisfy $\tw(F_r)=\chi(F_r)=r$. However, they do not settle whether the conclusion of Problem~\ref{prob1} holds under the stronger assumption $\tw(F)>\chi(F)$. Therefore, it is natural to pose the following strengthened version of Problem~\ref{prob1}.

\begin{prob}\label{prob:strict}
Is it true that if $\chi(F)=r\geq3$ and $\tw(F)>r$, then $\ex(n,K_r,F)=\Omega(n^{r-1})$?
\end{prob}

For an $r$-chromatic graph $F$, let $\sigma(F)$ denote the minimum size of a color class among all proper $r$-colorings of $F$. Observe that the lower bound in Problem~\ref{prob:strict} holds whenever $\sigma(F)\geq2$, even without any assumption on the treewidth of $F$. Indeed, let $G$ be an $n$-vertex complete $r$-partite graph with one part of size $\sigma(F)-1$ and the remaining $r-1$ parts as equal as possible. Clearly, $G$ is $F$-free. %We claim that $G$ is $F$-free. Otherwise, any copy of $F$ in $G$ would induce a proper $r$-coloring of $F$ with a color class of size at most $\sigma(F)-1$, contradicting the definition of $\sigma(F)$.
Moreover, $\cN(K_r,G)=\Omega(n^{r-1})$. Thus, if a counterexample to Problem~\ref{prob:strict} exists, it must satisfy $\sigma(F)=1$.
Let $\{v\}$ be a singleton color class in a proper $r$-coloring of $F$. Then $\chi(F-v)\leq r-1$. Since $\chi(F)\leq\chi(F-v)+1$, we have $\chi(F-v)=r-1$. Furthermore, if $\tw(F)>r$, then deleting a vertex decreases the treewidth by at most one, and hence $\tw(F-v)\geq\tw(F)-1\geq r$. Thus, the remaining case of Problem~\ref{prob:strict} consists of graphs $F$ having a vertex $v$ such that $\chi(F-v)=r-1$ and $\tw(F-v)\geq r$.

\section*{\normalsize Funding}
The research of Zhou and Yuan was supported by the National Natural Science Foundation of China (Nos.~12271337 and 12371347).

\section*{\normalsize Declaration of interest}
The authors declare no known conflicts of interest.

\end{document}